\newtheorem{thm}{Theorem}[section]
\newtheorem*{thm*}{Theorem}
\newtheorem*{thx}{Acknowledgements}
\newtheorem*{Existence}{Theorem}
\newtheorem*{cor*}{Corollary}
\newtheorem{prop}[thm]{Proposition}
\newtheorem{lem}[thm]{Lemma}% \pagestyle{headings}        % Pour mettre des entÃƒÆ’Ã†â€™Ãƒâ€šÃ‚Âªtes avec les titres
\theoremstyle{definition}
\newtheorem{definition}[thm]{Definition}
\newtheorem{rmk}[thm]{Remark}
\newtheorem{notation}[thm]{Notation}
\def\lquotient#1#2{%
\makeatletter
\lower.6ex\hbox{$#1$}\backslash\raise.3ex\hbox{$#2$}%
\makeatother
}															
\def\rquotient#1#2{%
\makeatletter
\raise.6ex\hbox{$#1$}/\raise.3ex\hbox{$#2$}%
\makeatother
}
\newcommand{\cB}{{\mathcal B}}
\newcommand{\cC}{{\mathcal C}}
\newcommand{\cX}{{\mathcal X}}
\newcommand{\cY}{{\mathcal Y}}
\newcommand{\ra}{\rightarrow}
\newcommand{\hra}{\hookrightarrow}
\title{\textbf{Combination of classifying spaces for proper actions}}           
\author{Alexandre Martin\footnote{This research is supported by the European Research Council (ERC) grant of Goulnara Arzhantseva, grant agreement n$^o$ 259527.}}
\date{}
\begin{document}
\maketitle

\begin{abstract} Given a group action on a simplicial complex such that each simplex stabiliser admits a cocompact model of classifying space for proper actions, we give conditions implying the existence of a cocompact model of classifying space for the whole group. This is used to generalise previous combination results for boundaries of groups and hyperbolic groups.\end{abstract}

%\noindent {\bf Primary MSC.}  20F65. \\
%\noindent {\bf Secondary MSC}. 57M07.\\

%\noindent Alexandre Martin, to be written???

%\noindent E-mail: ???\\

\section{Introduction.}

A central problem of geometric group theory is to understand, given a group action on a simplicial complex, to what extent the properties of the group come from properties of the stabilisers of simplices. In this article, we focus on the existence of a cocompact model of classifying space for proper actions. In the case of an amalgamated product or HNN extension, Scott and Wall \cite{ScottWall} construct an explicit model of classifying space for proper actions as a \textit{tree of spaces} over the Bass--Serre tree of the splitting. We prove the following combination theorem for complexes of groups of arbitrary dimension:

\begin{thm*}
Let $G(\cY)$ be a developable complex of groups over a finite simplicial complex $Y$, such that:
\begin{itemize}
\item for every finite subgroup of the fundamental group of $G(\cY)$, the associated fixed-point set in the universal cover of $G(\cY)$ is contractible,
\item every simplex stabiliser admits a cocompact model of classifying space for proper actions.
\end{itemize}  
Then there exists a cocompact model of classifying space for proper actions for the fundamental group of $G(\cY)$, obtained as a complex of spaces over the universal cover of $G(\cY)$, and with fibres classifying spaces for proper actions of the local groups of $G(\cY)$.
\end{thm*}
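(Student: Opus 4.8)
The plan is to realise the desired classifying space as a complex of spaces over the development $D$ of $G(\cY)$, obtained by replacing each simplex by a classifying space for proper actions of its stabiliser. Concretely, I would first fix for each simplex $\sigma$ of $Y$ a cocompact model $\underline{E}G_\sigma$ of the classifying space for proper actions of the local group $G_\sigma$, which exists by the second hypothesis. For a face $\tau<\sigma$, the structure monomorphism $G_\sigma\hookrightarrow G_\tau$ turns $\underline{E}G_\tau$, with action restricted to $G_\sigma$, into a model of $\underline{E}G_\sigma$, so the universal property yields a $G_\sigma$-equivariant map $\underline{E}G_\sigma\to\underline{E}G_\tau$, unique up to equivariant homotopy. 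The first genuine task is to rectify this family of maps, a priori defined only up to homotopy, into a strict diagram indexed by the poset of simplices of $D$. I would carry this out by induction on dimension, choosing the maps on orbit representatives and extending $G$-equivariantly, using at each stage that the space of equivariant maps between the relevant proper complexes is contractible, so that both the coherence along chains of faces and the $G$-equivariance can be arranged. The geometric realisation (homotopy colimit) of this diagram is then a $G$-CW complex $E$ fibred over $D$, with fibre over $\sigma$ equal to $\underline{E}G_\sigma$.

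Two of the three defining properties are comparatively direct. Each cell of $E$ lies over a simplex $\sigma$ of $D$ and inside a cell of the fibre $\underline{E}G_\sigma$, so its stabiliser embeds into the (finite) stabiliser of that fibre-cell for the proper $G_\sigma$-action; hence $G$ acts properly on $E$, and $E^H=\varnothing$ for every infinite $H\le G$. Cocompactness holds because $G\backslash E$ is built from the finite complex $Y$ by gluing in the compact quotients $G_\sigma\backslash\underline{E}G_\sigma$.

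The heart of the matter is to show that $E^H$ is contractible for every finite $H\le G$, which I would establish through the projection $E^H\to D^H$. After passing to the barycentric subdivision, so that any simplex fixed setwise by $H$ is fixed pointwise, a point of $E$ is $H$-fixed exactly when it lies over an $H$-fixed flag of $D$ and is fixed in the corresponding fibre. Since the construction is cellular and $G$-equivariant, taking $H$-fixed points commutes with the homotopy colimit, so that $E^H$ is a complex of spaces over $D^H$ with fibre over $\sigma$ equal to $(\underline{E}G_\sigma)^H$. Here $D^H$ is contractible by the first hypothesis, while for each $H$-fixed $\sigma$ the subgroup $H$ is a finite subgroup of $G_\sigma$, so $(\underline{E}G_\sigma)^H$ is contractible by the defining property of $\underline{E}G_\sigma$. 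Contractibility of $E^H$ then follows from the standard fact that the homotopy colimit of a diagram of contractible spaces projects by a homotopy equivalence onto the nerve of its indexing poset, which here is homeomorphic to $D^H$.

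I expect the principal obstacle to be exactly the rectification of the homotopy-commutative fibre maps into a strict, $G$-equivariant diagram, together with the verification that $H$-fixed points genuinely commute with the complex-of-spaces construction. A further point requiring care is that, since the local groups may be infinite, the action of $G$ on $D$ is not proper: properness is created only after fattening the simplices by their fibres, so the homotopy-colimit arguments must be organised equivariantly by hand rather than quoted directly. Granting these, $E$ is a cocompact $G$-CW model of the classifying space for proper actions, fibred over $D$ with fibres the classifying spaces of the local groups, as required.
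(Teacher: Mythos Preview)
Your outline is correct and follows the same overall strategy as the paper: realise the classifying space as a complex of spaces over the development with fibres the $\underline{E}G_\sigma$, check properness and cocompactness cellwise, and deduce contractibility of $E^H$ from contractibility of $D^H$ together with contractibility of the fibrewise fixed sets $(\underline{E}G_\sigma)^H$.

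The paper organises the argument differently, and the difference lies precisely in the rectification step you flag as the principal obstacle. Rather than strictifying the fibre diagram directly over the development $D$, the paper works locally over \emph{blocks}: for each simplex $\sigma$ of $Y$ it forms the contractible cone $\widetilde{B(\sigma)}$ (the local development of the induced complex of groups on the star of $\sigma$) and, by an explicit skeletal induction driven by the equivariant extension Lemma~\ref{lemmetechnique}, builds a complex of spaces $E(\sigma)$ over it. These $E(\sigma)$ are themselves new cocompact models of $\underline{E}G_\sigma$, and the block inclusions $\widetilde{B(\sigma')}\hookrightarrow\widetilde{B(\sigma)}$ are covered by embeddings $E(\sigma')\hookrightarrow E(\sigma)$ which, after an explicit twist by the cocycle elements, satisfy the strict relation $\phi_c\phi_b=g_{c,b}\phi_{cb}$. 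This yields a \emph{compatible complex of classifying spaces} in the sense of Definition~\ref{EZcomplexofspaces}, and the final assembly into a model of $\underline{E}G$ over the universal cover is then quoted from the author's earlier paper \cite{MartinBoundaries}. Your homotopy-colimit formulation over $D$ collapses these two stages into one; the paper's modular route isolates the rectified diagram as an object of independent interest (it is the input to the boundary and hyperbolicity combination theorems discussed in the introduction), whereas your version is more economical if one only wants the bare existence statement. Note in particular that the paper's rectification genuinely replaces the fibres by larger homotopy-equivalent spaces, which is what one should expect: your phrasing ``choose maps on orbit representatives'' with the original $\underline{E}G_\sigma$ as fibres will not in general produce a strictly commuting diagram without such an enlargement.
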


Certain compactifications of cocompact models of classifying spaces for proper actions, when they exist, have proved to be very useful as they imply for instance the Novikov conjecture for the group \cite{FarrellLafontEZStructures}. In \cite{MartinBoundaries}, the author studied the asymptotic topology of a group acting cocompactly on a non-positively curved simplicial complex by means of the asymptotic topology of the stabilisers of simplices. As an example, the following theorem was proved: 

\begin{thm*}[M.\cite{MartinBoundaries}]
Let $G(\cY)$ be a non-positively curved complex of groups over a finite piecewise-Euclidean complex $Y$, such that there exists a complex of classifying spaces compatible with $G(\cY)$. Let $G$ be the fundamental group of $G(\cY)$ and $X$ be a universal covering of $G(\cY)$. Assume that:
\begin{itemize}
\item The universal covering $X$ is hyperbolic (for the associated piecewise-Euclidean structure),
\item The local groups are hyperbolic and all the local maps are quasiconvex embeddings,
\item The action of $G$ on $X$ is acylindrical. 
\end{itemize}
Then $G$ is hyperbolic and the local groups embed in $G$ as quasiconvex subgroups. \qed
\end{thm*}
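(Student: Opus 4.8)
The plan is to produce a geodesic hyperbolic space on which $G$ acts geometrically, so that hyperbolicity of $G$ follows from the Milnor--\v{S}varc lemma. Since a complex of classifying spaces compatible with $G(\cY)$ is assumed, I would take its development: a $G$-cocompact complex of spaces $\widetilde{X}$ whose underlying complex is the universal covering $X$ and whose fibre over a point of a simplex $\sigma$ is a classifying space for the local group $G_\sigma$. Equipping each fibre with a geometric $G_\sigma$-action (for a finitely generated hyperbolic group the Rips complex provides a cocompact, hyperbolic model of classifying space for proper actions) and gluing along the local maps turns $\widetilde{X}$ into a geodesic metric space carrying a proper cocompact $G$-action. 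It then suffices to prove that $\widetilde{X}$ is Gromov-hyperbolic, since the resulting quasi-isometry $G \simeq \widetilde{X}$ transports hyperbolicity to $G$.

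Next I would record the coarse geometry of the building blocks and reduce hyperbolicity of the total space to a transversality statement. The base $X$ is hyperbolic by hypothesis, the fibres are hyperbolic because the local groups are, and the attaching maps between fibres are quasi-isometric embeddings because the local maps are quasiconvex embeddings of hyperbolic groups. This exhibits $\widetilde{X}$ as a bundle of uniformly hyperbolic fibres over a hyperbolic base with quasi-isometrically embedded fibre inclusions, which is exactly the situation addressed by the Combination Theorem for hyperbolic groups: hyperbolicity of the total space is reduced to a coarse-geometric condition controlling how long distinct fibres can remain close. The role of the acylindricity hypothesis is to supply precisely this condition. Acylindricity bounds, in terms of the distance two simplices travel together, the stabiliser of a long geodesic, and coarsely this forces two fibres that run parallel for a long time to collapse onto each other; this is the analogue in our higher-dimensional setting of the Bestvina--Feighn flaring/annuli condition. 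I would use it to rule out the quasi-flat configurations --- essentially products of a long base geodesic with a fibre geodesic --- that are the only obstruction to hyperbolicity of $\widetilde{X}$, and thereby verify the hypotheses of the combination theorem and conclude that $\widetilde{X}$, hence $G$, is hyperbolic.

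For the quasiconvexity statement, I would identify each local group $G_\sigma$ with a fibre of $\widetilde{X}$, so that it suffices to show the fibre embeds quasiconvexly in the hyperbolic space $\widetilde{X}$ and then transfer this along the quasi-isometry to $G$. Using hyperbolicity of $\widetilde{X}$ together with the combinatorial structure of the complex of spaces, I would construct a coarse Lipschitz retraction of $\widetilde{X}$ onto a bounded neighbourhood of a chosen fibre, propagating the nearest-point projections fibre by fibre and invoking the quasiconvexity of the local maps at each gluing. The existence of such a retraction yields quasiconvexity of the fibre, and hence of $G_\sigma$ as a subgroup of $G$.

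The main obstacle I expect is the transversality/flaring step. The classical combination theorem is formulated for graphs of groups, where the base is a tree and acylindricity has a clean formulation in terms of annuli; here the base is a general, possibly high-dimensional, hyperbolic CAT(0) complex, so the branching of fibres and the presence of higher-dimensional cells must be controlled simultaneously. Translating the acylindricity of the $G$-action on $X$ into a uniform flaring estimate for geodesics in $\widetilde{X}$, and verifying that this estimate is genuinely compatible with the gluing data coming from the quasiconvex local maps, is the technical heart of the argument.
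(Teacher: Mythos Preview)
This theorem is not proved in the present paper. It is quoted from the author's earlier work \cite{MartinBoundaries} and stated here only for context; note the terminal \qed\ attached to the statement with no accompanying proof environment. Consequently there is no ``paper's own proof'' against which to compare your proposal.

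That said, your outline is broadly consistent with the strategy the paper sketches in its introduction: build a classifying space for proper actions of $G$ as a complex of spaces over $X$ with Rips-complex fibres, and then analyse the asymptotic geometry of that total space. The paper explicitly says that ``the first step is to combine the classifying spaces for proper actions of the various stabilisers of simplices (in this particular case, Rips complexes) into a classifying space for proper actions of the fundamental group of the complex of groups \ldots\ With such a model at hand, we can then try to understand the asymptotic topology of the whole space by means of the asymptotic topology of its smaller pieces.'' Your plan follows exactly this philosophy. Whether the detailed mechanism you propose for hyperbolicity --- a higher-dimensional flaring/annuli criterion extracted from acylindricity --- matches what is actually done in \cite{MartinBoundaries} cannot be assessed from the present paper; one would need to consult that reference. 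In particular, the approach there constructs a candidate Gromov boundary for the total space and proves directly that the compactified space satisfies the required convergence properties, rather than verifying a Bestvina--Feighn-type flaring inequality, so your reduction to ``ruling out quasi-flats via flaring'' may differ in its technical core from the original argument even if the overall architecture agrees.
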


To prove such a theorem, the first step is to combine the classifying spaces for proper actions of the various stabilisers of simplices (in this particular case, Rips complexes) into a classifying space for proper actions of the fundamental group of the complex of groups; the appropriate data used to construct such a classifying space is the notion of a \textit{complex of classifying spaces compatible with a complex of groups} mentioned in the previous theorem, and introduced in \cite{MartinBoundaries}. Such a classifying space is constructed as \textit{complex of spaces} over the universal cover of the complex of groups. With such a model at hand, we can then try to understand the asymptotic topology of the whole space by means of the asymptotic topology of its smaller pieces. 

Constructing a complex of classifying spaces compatible with a given complex of groups is a non-trivial problem. This can be carried out by ad hoc constructions when the combinatorics of the underlying complex of groups is quite simple (for instance, in the case of a simple complex of hyperbolic groups \cite{MartinBoundaries}, or in the case of metric small cancellation over a graph of groups \cite{MartinSmallCancellationClassifying}), but can prove to be much harder in general. There are many examples of groups acting on simplicial complexes for which even the simplicial structure of the quotient complex is hard to describe: the action of the mapping class group of a hyperbolic surface on its curve complex, the action of a group admitting a codimension one subgroup on the associated CAT(0) cube complex, etc. Thus, if ones wants to use the previous combination theorem to study groups through their non-proper actions on simplicial complexes in general, it would be preferable to have a way to construct compatible complexes of classifying spaces which does not rely too much on the combinatorics of the associated complex of groups. 

 In this article, we give a general procedure for constructing such objects.

\begin{Existence} Let $G(\cY)$ be a complex of groups over a finite simplicial complex $Y$. Then there exists a compatible complex of classifying spaces, and we can require the local maps to be embeddings. If all the local groups of $G(\cY)$ admit cocompact models of classifying spaces for proper actions, the fibres can be taken as cocompact models of classifying spaces for proper actions as well. 
\end{Existence}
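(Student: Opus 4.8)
The plan is to build the compatible complex of classifying spaces on the development (universal cover) $X$ of $G(\cY)$, where it can be described as a \emph{strict} $G$-equivariant system of spaces, and then to push it down to $Y$. Writing $G$ for the fundamental group of $G(\cY)$, recall that $X$ carries a simplicial $G$-action with $G \backslash X = Y$, that the stabiliser $G_{\tilde\sigma}$ of a simplex $\tilde\sigma$ projecting to $\sigma$ is isomorphic to the local group $G_\sigma$, and that a face inclusion $\tilde\sigma \subseteq \tilde\tau$ induces an honest inclusion of stabilisers $G_{\tilde\tau} \leq G_{\tilde\sigma}$, with no twisting, since $X$ is a development. Accordingly, I would aim to produce a $G$-equivariant functor $\tilde\sigma \mapsto E_{\tilde\sigma}$ from the poset of simplices of $X$ (ordered by reverse inclusion) to $G$-CW-complexes such that each $E_{\tilde\sigma}$ is a model of $\underline{E}G_{\tilde\sigma}$, each structure map $E_{\tilde\tau} \to E_{\tilde\sigma}$ (for $\tilde\sigma \subseteq \tilde\tau$) is a $G_{\tilde\tau}$-equivariant embedding, and the maps compose strictly. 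Taking the $G$-quotient of the associated complex of spaces then yields a complex of spaces over $Y$ whose fibres are the $\underline{E}G_\sigma$ and whose structure maps are equivariant over the local maps $\psi_a$ and satisfy the cocycle relations with the given twisting elements; this is exactly a compatible complex of classifying spaces.

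The homotopy-theoretic engine is the following observation. If $H \leq K$ is a subgroup and $\underline{E}K$ is any model of the classifying space for proper actions of $K$, then, regarded as an $H$-space via the inclusion, $\underline{E}K$ is again a model of $\underline{E}H$: finite subgroups of $H$ remain finite in $K$, and for finite $F \leq H$ one has $(\underline{E}K)^F$ contractible, while $(\underline{E}K)^F = \varnothing$ for $F$ infinite. Consequently, for every proper $H$-CW-complex $A$ the space of $H$-equivariant maps $A \to \underline{E}K$ is nonempty and, more importantly, contractible (equivariant obstruction theory against contractible fixed-point sets). This ``contractible choices'' principle is what lets me produce the required maps together with all the higher coherences needed for strictness, and it forces the homotopy type of each fibre to be that of $\underline{E}G_{\tilde\sigma}$.

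I would then construct the functor by induction on dimension, over the finitely many $G$-orbits of simplices of $X$. For a top-dimensional representative $\sigma$, set $E_{\tilde\sigma}$ to be a chosen model of $\underline{E}G_{\tilde\sigma}$ (cocompact if the hypothesis of the last sentence holds), and extend $G$-equivariantly over its orbit. For a representative $\sigma$ of dimension $d$, the part of the functor already built over the simplices strictly containing $\tilde\sigma$ assembles, over the finite link, into a proper $G_{\tilde\sigma}$-CW-complex $\partial_{\tilde\sigma}$ into which each higher fibre embeds compatibly. By the principle above there is a $G_{\tilde\sigma}$-map $\partial_{\tilde\sigma} \to \underline{E}G_{\tilde\sigma}$, and I define $E_{\tilde\sigma}$ to be its mapping cylinder. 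The cylinder deformation retracts $G_{\tilde\sigma}$-equivariantly onto $\underline{E}G_{\tilde\sigma}$, hence is itself a model of $\underline{E}G_{\tilde\sigma}$, and it contains $\partial_{\tilde\sigma}$, and therefore each higher fibre, as a $G_{\tilde\sigma}$-equivariant closed embedding. Extending over the orbit by $G$ completes the step. Cocompactness propagates: $E_{\tilde\sigma}/G_{\tilde\sigma}$ is built from the finitely many cofaces of $\sigma$ in $Y$ and from a cocompact $\underline{E}G_\sigma$, so it is compact whenever the higher fibres are cocompact; since $Y$ is finite, the whole complex of spaces is cocompact.

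The main obstacle is coherence: I must realise the structure maps as \emph{strictly} composable embeddings, not merely commuting up to homotopy, for only then do they descend to a genuine complex of spaces satisfying the cocycle conditions of $G(\cY)$. Two devices handle this. First, passing to the development removes the twisting, so that over $X$ the target is an honest $G$-equivariant functor rather than a twisted one. Second, at each inductive stage the object $\partial_{\tilde\sigma}$ must be formed so that the new maps $E_{\tilde\tau} \hookrightarrow \partial_{\tilde\sigma} \hookrightarrow E_{\tilde\sigma}$ compose strictly with the maps built in higher dimension; this requires taking $\partial_{\tilde\sigma}$ to be the geometric realisation of the already-strict sub-functor over the link (equivalently, a Reedy-cofibrant model of its homotopy colimit), whose defining cocone makes all composites into it agree on the nose. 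Contractibility of the relevant equivariant mapping spaces guarantees that these strict choices can always be made and never obstruct later stages, which is precisely what makes the rigidification go through.
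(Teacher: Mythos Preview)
Your overall strategy is close in spirit to the paper's, but there is one substantive gap: you pass to the global development $X$, which presupposes that $G(\cY)$ is developable, whereas the Existence theorem carries no such hypothesis. The paper avoids this by working not over $X$ but over the \emph{local developments} $\widetilde{B(\sigma)}$ of the blocks $\cB(\sigma)$ (the sub-scwol of simplices containing $\sigma$). The induced complex of groups $G(\cB(\sigma))$ is always developable via the explicit morphism $F_\sigma$ to $G_\sigma$, so $\widetilde{B(\sigma)}$ exists regardless of whether $G(\cY)$ itself is developable; it is a $G_\sigma$-simplicial cone. The paper then builds each fibre $E(\sigma)$ as a complex of spaces over $\widetilde{B(\sigma)}$ by an explicit inductive skeleton-by-skeleton gluing (morally your mapping-cylinder / homotopy-colimit idea, carried out by hand with chosen equivariant homotopies $H_{a_2,a_1}$), obtaining a cocompact $\underbar{E}G_\sigma$.

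Apart from this, the two constructions run in parallel: both manufacture an $\underbar{E}G_\sigma$ by thickening a chosen model over the poset of cofaces of $\sigma$, and both rely on the same equivariant obstruction principle---your ``contractible choices'' is exactly the paper's Lemma~\ref{lemmetechnique}. The remaining step you gloss over, namely descending the strict $G$-equivariant system to $Y$ and recovering maps $\phi_a$ satisfying $g_{b,a}\,\phi_{ba}=\phi_b\phi_a$, is where the paper does genuine work: the structure embeddings $E(\sigma')\hookrightarrow E(\sigma)$ are not the na\"ive inclusions but are twisted, sending $[g,x,A,(t_i)]$ to $[\psi_b(g)\,g_{b,a},x,A,(t_i)]$, and the relation $\phi_c\phi_b=g_{c,b}\,\phi_{cb}$ then falls out of the cocycle identity $\psi_c(g_{b,a})g_{c,ba}=g_{c,b}g_{cb,a}$. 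In the developable case your ``take the $G$-quotient'' unwinds to exactly this computation once lifts $\widetilde{\sigma}$ and elements $h_a$ are fixed, but it should be written out; in the non-developable case the block formulation is not a convenience but the crux of the argument.
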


In particular, each theorem of \cite{MartinBoundaries} holds  with the assumption of the existence of a compatible complex of classifying spaces removed. For instance, the previous combination theorem can be reformulated in the following way:

\begin{cor*}
Let $G$ be a group acting without inversion and cocompactly on a piecewise-Euclidean simplicial complex such that:\begin{itemize}
\item the complex $X$ is hyperbolic and CAT(0),
\item the stabilisers of simplices are hyperbolic and they embed into one another as quasiconvex subgroups,
\item the action of $G$ on $X$ is acylindrical. 
\end{itemize}
Then $G$ is hyperbolic. Furthermore, the stabilisers of simplices embed in $G$ as quasiconvex subgroups.
\end{cor*}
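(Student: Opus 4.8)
The plan is to recognise that the corollary is exactly the combination theorem of \cite{MartinBoundaries} quoted above, \emph{minus} the hypothesis on the existence of a compatible complex of classifying spaces, the latter being precisely what the Existence theorem now provides for free. So the whole proof is a reduction, and the first thing I would do is pass from the abstract action to the complex-of-groups formalism. Since $G$ acts cocompactly and without inversion on $X$, the quotient $Y = G\backslash X$ is a finite simplicial complex, and it inherits a piecewise-Euclidean structure from that of $X$. The standard construction associating a complex of groups to such an action (as in Bridson--Haefliger) then produces a developable complex of groups $G(\cY)$ over $Y$ whose fundamental group is $G$, whose local groups are the stabilisers of simplices, and whose local maps are the inclusions between these stabilisers. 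Because $X$ is CAT(0), hence simply connected, it is the universal covering of $G(\cY)$, and the non-positive curvature of $X$ makes $G(\cY)$ a non-positively curved complex of groups over the finite piecewise-Euclidean complex $Y$.

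With this dictionary in place, I would check that the three bullets of the corollary translate verbatim into the three hypotheses of the combination theorem: the universal covering $X$ is hyperbolic; the local groups (the simplex stabilisers) are hyperbolic and the local maps (the inclusions) are quasiconvex embeddings; and the action of $G$ on $X$ is acylindrical. The one hypothesis of that theorem not appearing among the assumptions of the corollary is the existence of a complex of classifying spaces compatible with $G(\cY)$.

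This missing ingredient is exactly what the Existence theorem supplies unconditionally: every complex of groups over a finite simplicial complex admits a compatible complex of classifying spaces. Moreover, since the local groups here are hyperbolic, they admit cocompact models of classifying spaces for proper actions (for instance their Rips complexes), so the Existence theorem lets us take the fibres to be cocompact. All the hypotheses of the combination theorem of \cite{MartinBoundaries} are therefore met, and applying it yields that $G$ is hyperbolic and that the simplex stabilisers embed in $G$ as quasiconvex subgroups, which is the conclusion.

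I do not expect a genuine obstacle here, since the corollary is a packaging of two already-established results; the one point to verify with care --- and hence the part I would write out in detail --- is the translation between the abstract cocompact action of $G$ on the CAT(0) complex $X$ and the complex-of-groups data $\bigl(G(\cY), X\bigr)$. Concretely, one must confirm that the quotient is genuinely a finite, non-positively curved complex of groups with $X$ as its universal cover, and that ``stabiliser of a simplex / inclusion of stabilisers'' on the geometric side corresponds to ``local group / local map'' on the complex-of-groups side, so that the acylindricity, hyperbolicity, and quasiconvexity hypotheses transfer without loss.
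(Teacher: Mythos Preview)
Your proposal is correct and matches the paper's approach exactly: the corollary is presented there not with a standalone proof but as the combination theorem of \cite{MartinBoundaries} reformulated once the Existence theorem removes the need to assume a compatible complex of classifying spaces. The translation you outline between the cocompact action and the complex-of-groups data is precisely the implicit step the paper takes for granted.
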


To obtain compatible complexes of classifying spaces, we follow a construction due to Haefliger \cite{HaefligerExtension}: given a complex of groups $G(\cY)$ over a simplicial complex $Y$ with contractible universal cover, he constructs an Eilenberg--MacLane space for the fundamental group of $G(\cY)$ as a complex of spaces over $Y$. As we want to allow groups with torsion, the point of view adopted here is slightly different. Instead of reasoning over $Y$,  we will be working over some particular subcomplexes of the universal cover of $G(\cY)$, called \textit{blocks}. The fibres will not be Eilenberg--MacLane spaces but classifying spaces for proper actions, and all the constructions will be made equivariant.

It should be noted that, although this article is written in the framework of complexes of groups over simplicial complexes for simplicity reasons, the constructions carry over without any essential change to the case of complexes of groups over \textit{polyhedral} complexes.\\

The article is organised as follows. In Section \ref{Section2}, we review a few elementary facts on complexes of groups. In Section \ref{Section4}, we define the block associated to a simplex and study the induced complex of groups. In Section \ref{Section3}, we recall the definition of a complex of spaces (in the sense of Corson \cite{CorsonComplexesofGroups}) and of a complex of classifying spaces compatible with a complex of groups \cite{MartinBoundaries}. Section \ref{Section5} is devoted to the construction of a compatible complex of classifying spaces.

\begin{thx}
I would like to thank F. Haglund and T. Januszkiewicz for pointing out this problem, as well as F. Haglund for many helpful related discussions.
\end{thx}

\section{Background on complexes of groups.}
\label{Section2}
\subsection{Definitions.}
Complexes of groups are a high-dimensional generalisation of graphs of groups, that is, objects encoding group actions on arbitrary simplicial complexes, and were introduced by Gersten--Stallings \cite{GerstenStallings}, Corson \cite{CorsonComplexesofGroups} and Haefliger \cite{HaefligerOrbihedra}. Haefliger defined a notion of complexes of groups over more general objects called \textit{small categories without loops} (abbreviated \textit{scwols}), a combinatorial generalisation of polyhedral complexes. We recall here basic definitions and properties of complexes of groups. For a deeper treatment of the material covered in this section, we refer the reader to \cite{BridsonHaefliger}.
\begin{definition}[small category without loop]
A  \textit{small category without loop} (briefly a \textit{scwol}) is a set $\cY$ which is the disjoint union of a set $V(\cY)$ called the vertex set of $\cY$, and a set $A(\cY)$ called the  set of edges\footnote{In the literature, the set of edges is usually denotes $E(\cY)$. Here however, as the letter $E$ will be used to denote classifying spaces (or spaces constructed out of such classifying spaces), we use the French notation $A(\cX)$ so as to avoid confusions.} of $\cY$, together with maps
$$i: A(\cY)  \ra V(\cY) \mbox{ and } t: A(\cY) \ra V(\cY).$$
For an edge $a \in A(\cY)$, $i(a)$ is called the initial vertex of $a$ and $t(a)$ the terminal vertex of $a$. 

For $k \geq 1$, let $A^{(k)}(\cY)$ be the set of sequences $(a_k, \ldots, a_1)$ of edges of $\cY$ such that $i(a_{i+1}) = t(a_i)$ for $1 \leq i <k$ (the sequence of edges $a_k, \ldots, a_1$ is said to be composable). For $A=(a_k, \ldots, a_1) \in A^{(k)}(\cY)$, we set $i(A):= i(a_1)$ and $t(A):=t(a_k)$. By convention, we set $A^{(0)}(\cY) = V(\cY)$.

A third map 
$$ A^{(2)}(\cY) \ra A(\cY) $$
is given that associates to a pair $(b,a)$ of composable edges an edge $ba$ called their concatenation or composition. These maps are required to satisfy the following conditions:
\begin{itemize}
 \item For every $(b,a) \in A^{(2)}(\cY)$, we have $i(ba)=i(a)$ and $t(ba)=t(b)$; 
 \item For every $(c,b,a) \in A^{(3)}(\cY)$, we have $(cb)a = c(ba)$ (and the composition is simply denoted $cba$).
 \item For every $a \in A(\cY)$, we have $t(a) \neq i(a)$.
\end{itemize}
\end{definition}

A fundamental example of scwol is the following:

\begin{definition}[simplicial scwol associated to a simplicial complex] If $Y$ is a simplicial complex, a scwol $\cY$ is naturally associated to $Y$ in the following way: 
\begin{itemize}
 \item $V(\cY)$ is the set of simplices of $Y$, 
 \item $A(\cY)$ is the set of pairs $(\sigma, \sigma') \in V(\cY)^2$ such that $\sigma \subset \sigma'$.
 \item For a pair $a=(\sigma, \sigma') \in A(\cY)$, we set $i(a) = \sigma'$ and $t(a)=\sigma$.
 \item For composable edges $b=(\sigma, \sigma')$ and $a=(\sigma', \sigma'')$, we set $ba=(\sigma, \sigma'')$.
\end{itemize}
We call $\cY$ the \textit{simplicial scwol associated to $X$}.
\end{definition}

In what follows, we will often omit the distinction between a simplex $\sigma$ of $Y$ and the associated vertex of $\cY$.

\begin{definition}[Geometric/Simplicial realisation of a scwol]
For integers $k \geq 2$ and $0 \leq i \leq k$, we define maps $\partial_i: A^{(k)}(\cY) \ra A^{(k-1)}(\cY)$ as follows: 
\begin{align*} 
\partial_0(a_k, \ldots, a_1) &= (a_{k}, \ldots, a_ 2)
 \\ \partial_i(a_k, \ldots, a_1) &= (a_{k}, \ldots,a_{i+1}a_i, \ldots, a_1 ) ~~1  \leq i < k
 \\ \partial_k(a_k, \ldots, a_1) &= (a_{k-1}, \ldots, a_1). 
\end{align*}
For $k=1$, we set $\partial_0 a = i(a)$ and $\partial_1(a) = t(a)$.

Let $\Delta^k$ be the standard Euclidean $k$-simplex, that is, the set of elements $(t_0, \ldots, t_k)$ with $ t_i \geq 0$ and $\sum_i t_i=1$. For $k \geq 1$ and $0 \leq i \leq k$, we define maps $d_i: \Delta^{k-1} \ra \Delta^k$ by sending $(t_0, \ldots, t_{k-1})$ to $(t_0, \ldots, t_{i-1},0,t_i, \ldots, t_{k-1})$. 

The \textit{geometric realisation} of the scwol $\cY$ is the space obtained from the disjoint union 
$$\underset{k \geq 0, A \in A^{(k)}(\cY)}{\coprod} \{A\} \times \Delta^k$$
by identifying pairs of the form $(\partial_iA,x)$ and $(A, d_i(x))$; this is a piecewise-Euclidean simplicial complex. We call the underlying simplicial complex the \textit{simplicial realisation} of $\cY$. 

In what follows, we will make no difference between simplicial and geometric realisations.
\end{definition}

\begin{rmk}
The simplicial realisation of the scwol associated to a simplicial complex $Y$ is naturally isomorphic to the first barycentric subdivision $Y'$ of $Y$.
\end{rmk}

\begin{definition}[Complex of groups \cite{BridsonHaefliger}]
 Let $\cY$ be a scwol. A \textit{complex of groups $G(\cY)= (G_\sigma, \psi_a, g_{b,a})$ over $\cY$} is given by the following data: 
\begin{itemize}
 \item for each vertex $\sigma$ of $\cY$, a group $G_\sigma$ called the \textit{local group} at $\sigma$,
 \item for each edge $a$ of $\cY$, an injective homomorphism $\psi_a: G_{i(a)} \ra G_{t(a)}$,
 \item for each pair of composable edges $(b,a)$ of $\cY$, a \textit{twisting element} $g_{b,a} \in G_{t(b)}$,
\end{itemize}
with the following compatibility conditions:
\begin{itemize}
 \item for every pair $(b,a)$ of composable edges of $\cY$, we have 
$$ \mbox{Ad}(g_{b,a}) \psi_{ba} =  \psi_b \psi_a,$$
where $\mbox{Ad}(g_{b,a}): g \mapsto g_{b,a} \cdot g \cdot g_{b,a}^{-1}$ is the conjugation by $g_{b,a}$ in $G_{t(b)}$; 
 \item For $(c,b,a) \in A^{(3)}(\cY)$, the following cocycle condition holds:
$$ \psi_c(g_{b,a})g_{c,ba}= g_{c,b}g_{cb,a}.$$
\end{itemize}
\end{definition}

\begin{notation}
If $a$ is an edge of $\cY$ corresponding to an inclusion $\sigma\subset \sigma'$, we will sometimes write $\psi_{\sigma, \sigma'}$ in place of $\psi_a$. By convention, we also define $\psi_{\sigma, \sigma}$ as the identity map of $G_\sigma$.
\end{notation}

\begin{definition}[Morphism of complex of groups]
 Let $Y, Y'$ be simplicial complexes, $\cY$ (resp. $\cY'$) the associated simplicial scwols, $f: Y \ra Y'$ a non-degenerate simplicial map (that is, the restriction of $f$ to any simplex is a homeomorphism on its image) , and $G(\cY)$ (resp. $G(\cY')$) a complex of groups over $Y$ (resp. $Y'$). A \textit{morphism} $F=(F_\sigma, F(a))$ : $G(\cY) \ra G(\cY')$ over $f$ consists of the following:
\begin{itemize}
 \item for each vertex $\sigma$ of $\cY$, a homomorphism $F_{\sigma}: G_\sigma \ra G_{f(\sigma)}$, 
 \item for each edge $a$ of $\cY$, an element $F(a) \in G_{t(f(a))}$ such that
   
$$ \mbox{Ad}(F(a))\psi_{f(a)}F_{i(a)} = F_{t(a)}\psi_a,$$

  \item if $(b,a)$ is a pair of composable edges of $\cY$, we have
$$ F_{t(b)}(g_{b,a})F(ba) = F(b)\psi_{f(b)}(F(a))g_{f(b),f(a)}.$$
\end{itemize}
If all the $F_\sigma$ are isomorphisms, $F$ is called a \textit{local isomorphism}. If in addition $f$ is a simplicial isomorphism, $F$ is called an \textit{isomorphism}.
\end{definition}

\begin{definition}[morphism from a complex of groups to a group]
 Let $G(\cY)$ be a complex of groups over a scwol $\cY$ and $G$ a group. A \textit{morphism $F = (F_\sigma, F(a))$ from $G(\cY)$ to $G$} consists of a homomorphism $F_\sigma: G_\sigma \ra G$ for every $\sigma \in V(\cY)$ and an element $F(a) \in G$ for each $a \in E(\cY)$ such that
\begin{itemize}
 \item for every $a \in E(\cY)$, we have $F_{t(a)} \psi_a = \mbox{Ad}(F(a)) F_{i(a)}$,
 \item for every pair $(b,a)$ of composable edges of $\cY$, we have $F_{t(b)}(g_{b,a})F(ba)=F(b)F(a)$.
\end{itemize}
\end{definition}

\subsection{Developability.}

\begin{definition}[Complex of groups associated to an action without inversion of a group on a simplicial complex \cite{BridsonHaefliger}]
 Let $G$ be a group acting without inversion by simplicial isomorphisms on a simplicial complex $X$, let $Y$ be the quotient space and $p:X \ra Y$ the natural projection. Up to a barycentric subdivision, we can assume that $p$ restricts to a embedding on every simplex, yielding a simplicial structure on $Y$. Let $\cY$ be the simplicial scwol associated to $Y$.

For each vertex $\sigma$ of $\cY$, choose a simplex $\widetilde{\sigma}$ of $X$ such that $p(\widetilde{\sigma})=\sigma$. As $G$ acts without inversion on $X$, the restriction of $p$ to any simplex of $X$ is a homeomorphism on its image. Thus, to every simplex $\sigma'$ of $Y$ contained in $\sigma$, there is a unique $\tau$ of $X$ and contained in $\widetilde{\sigma}$, such that $p(\tau) = \sigma'$ To the edge $a= (\sigma, \sigma')$ of $\cY$ we then choose an element $h_a \in G$ such that $h_a.\tau = \widetilde{\sigma'}$. A \textit{complex of groups $G(\cY)=(G_\sigma, \psi_a, g_{b,a})$ over $Y$ associated to the action of $G$ on $X$} is given by the following:
\begin{itemize}
 \item for each vertex $\sigma$ of $\cY$, let $G_\sigma$ be the stabiliser of $\widetilde{\sigma}$, 
 \item for every edge $a$ of $\cY$, the homomorphism $ \psi_a: G_{i(a)} \ra G_{t(a)}$ is defined by 
$$ \psi_a(g) = h_agh_a^{-1},$$
 \item for every pair $(b,a)$ of composable edges of $\cY$, define
$$g_{b,a} = h_bh_ah_{ba}^{-1}.$$
\end{itemize}
Moreover, there is an associated morphism $F=(F_\sigma, F(a))$ from $G(\cY)$ to $G$, where $F_\sigma: G_\sigma \ra G$ is the natural inclusion and $F(a) = h_a$.
\label{inducedcomplexofgroups}
\end{definition}

\begin{definition}[Developable complex of groups]
 A complex of groups over a simplicial complex $Y$ is \textit{developable} if it is isomorphic to the complex of groups associated to an action without inversion on a simplicial complex.
\end{definition}

There is the following algebraic characterization of developability: 

\begin{thm}[Theorem III.$\cC$.2.15 of \cite{BridsonHaefliger}]
A complex of groups $G(\cY)$ is developable if and only if there exists a morphism from $G(\cY)$ to some group which is injective on the local groups. \qed
\label{developable}
\end{thm}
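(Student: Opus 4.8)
The statement is an equivalence in which only the backward implication requires work.

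\emph{Forward implication.} Suppose $G(\cY)$ is developable. Then, by definition, it is isomorphic as a complex of groups to the complex of groups associated to an action without inversion of some group $G$ on a simplicial complex. By Definition~\ref{inducedcomplexofgroups}, this associated complex of groups carries a canonical morphism $F=(F_\sigma,F(a))$ to $G$ in which each $F_\sigma$ is the inclusion into $G$ of the stabiliser of a simplex, and is therefore injective. Precomposing $F$ with the given isomorphism from $G(\cY)$ to the associated complex of groups, whose vertex maps are group isomorphisms, produces a morphism from $G(\cY)$ to $G$ that is injective on every local group. This is the direction I would dispatch immediately from the definitions.

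\emph{Backward implication.} The plan here is to reverse the procedure by means of the \emph{basic construction} (the development of $G(\cY)$ along the morphism). Given a morphism $F=(F_\sigma,F(a))$ from $G(\cY)$ to $G$ with each $F_\sigma$ injective, I would form a scwol $\cD$ as follows. The vertices of $\cD$ lying over a vertex $\sigma$ of $\cY$ are the left cosets $gF_\sigma(G_\sigma)$, $g\in G$; and over an edge $a$ of $\cY$ I place, for each such $g$, an edge joining $gF_{i(a)}(G_{i(a)})$ to $gF(a)^{-1}F_{t(a)}(G_{t(a)})$. The relation $F_{t(a)}\psi_a=\mathrm{Ad}(F(a))F_{i(a)}$ guarantees that this target coset does not depend on the chosen representative $g$, so the edges are well defined; and a short computation using the cocycle relation $F_{t(b)}(g_{b,a})F(ba)=F(b)F(a)$ shows that composing the edges over $a$ and $b$ lands on the same vertex as the edge over $ba$, so that the scwol axioms hold. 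The group $G$ acts on $\cD$ by left translation on the coset coordinate; since scwol automorphisms preserve the maps $i$ and $t$, this is an action without inversion, and the projection $gF_\sigma(G_\sigma)\mapsto\sigma$ identifies the quotient $G\backslash\cD$ with $\cY$.

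It then remains to check that $G(\cY)$ is recovered, up to isomorphism, as the complex of groups associated to this action. I would choose as base simplices the identity cosets $F_\sigma(G_\sigma)$: the stabiliser of such a vertex is exactly $F_\sigma(G_\sigma)$, which the injectivity of $F_\sigma$ identifies with $G_\sigma$. With these choices the transport element along the edge $a$ can be taken to be $h_a=F(a)$, and one reads off directly from $F_{t(a)}\psi_a=\mathrm{Ad}(F(a))F_{i(a)}$ that the recovered local map is $\psi_a$, while $h_bh_ah_{ba}^{-1}=F(b)F(a)F(ba)^{-1}=F_{t(b)}(g_{b,a})$ recovers the twisting element $g_{b,a}$. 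Thus the associated complex of groups is canonically isomorphic to $G(\cY)$, and $G(\cY)$ is developable.

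\emph{Main obstacle.} I expect the only genuinely delicate point to be the verification that $\cD$ is a scwol whose geometric realisation is an honest simplicial complex carrying a $G$-action without inversion, so that the machinery of Definition~\ref{inducedcomplexofgroups} applies verbatim; concretely, this is the bookkeeping of the elements $F(a)$ and the twisting elements $g_{b,a}$ through the compatibility and cocycle conditions, which is exactly what makes the edge maps and their compositions consistent. Injectivity of the $F_\sigma$ is used in a single place but is indispensable: it is what forces the vertex stabilisers to be isomorphic to the local groups themselves, rather than to proper quotients of them.
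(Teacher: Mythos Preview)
The paper does not give a proof of this theorem: it is quoted from \cite{BridsonHaefliger} and marked with a \qed. Your argument is correct and is precisely the standard proof found there, namely the \emph{basic construction} (development along the morphism $F$); the forward direction from Definition~\ref{inducedcomplexofgroups} and the backward direction via the coset scwol $\cD$ with lifts $\widetilde{\sigma}=F_\sigma(G_\sigma)$ and transport elements $h_a=F(a)$ are exactly as in Bridson--Haefliger, and your verification of the cocycle bookkeeping is accurate. The point you flag as the ``main obstacle'' is genuine but mild: the geometric realisation of any scwol is automatically a simplicial complex, and an action by scwol automorphisms induces a simplicial action without inversion, so Definition~\ref{inducedcomplexofgroups} applies directly.
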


\section{Induced complex of groups over a block.}
\label{Section4}
Unlike in Bass-Serre theory, not every complex of groups is developable. However, non-developability is a global phenomenon, a complex of groups being always developable around a vertex. In this section, we describe for every simplex $\sigma$ of $Y$, a sub-scwol associated to $\sigma$, called a \textit{block}, such that the induced complex of groups is developable.

\subsection{The block associated to a simplex.}

Given a simplex $\sigma$ of $Y$, we consider the consider the sub-scwol $\cB(\sigma) \subset \cY$ whose vertex set consists of those simplices of $Y$ containing $\sigma$ and whose set of edges consists of those edges of $\cY$ whose initial and terminal vertices are in $V(\cB(\sigma))$. The simplicial realisation of $\cB(\sigma)$ is a simplicial complex $B(\sigma)$, called the \textit{block} associated to $\sigma$, which is isomorphic to the subcomplex  $\{\sigma\} \star \mbox{lk}(\{\sigma\}, Y')$ of the first barycentric subdivision $Y'$ of $Y$ (where $\{\sigma\}$ denotes the vertex of $Y'$ corresponding to the simplex $\sigma$). 

\begin{figure}[ht!]
\begin{center}
\scalebox{0.6}{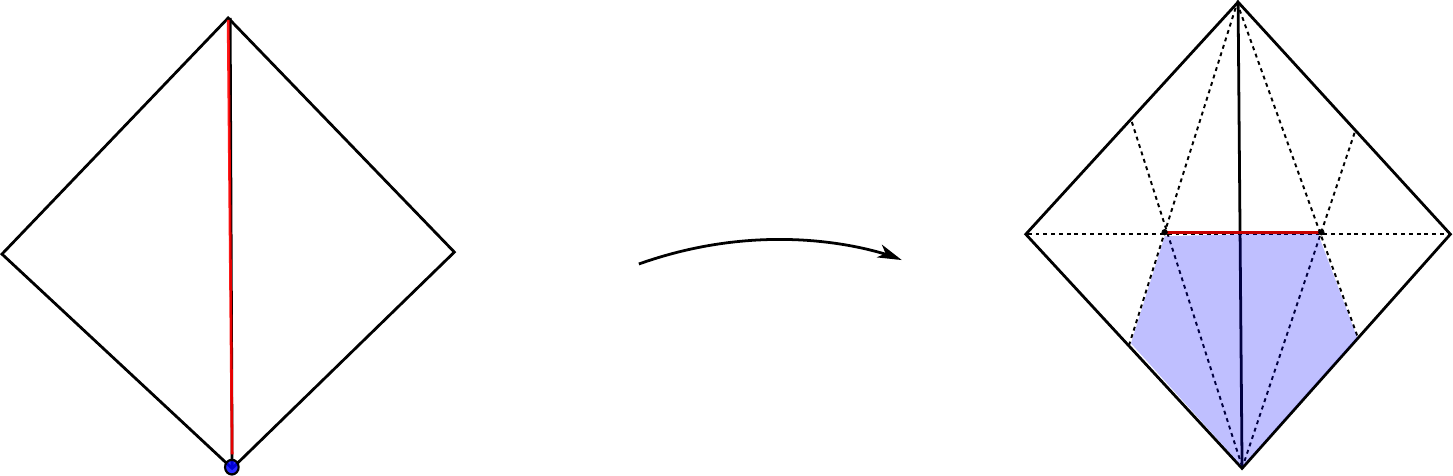}
\caption{On the left, a simplicial complex with a chosen vertex (blue) and edge (red). On the right, the associated blocks.}
\end{center}
\end{figure}

For every inclusion of simplices $\sigma \subset \sigma'$, we have an inclusion of blocks $B(\sigma') \subset B(\sigma)$. Moreover, since the block $B(\sigma)$ is simplicially a cone over the link $\mbox{lk}(\{\sigma\}, Y')$, it is contractible.

\subsection{The local development.}

We denote by $G\big(\cB(\sigma)\big)$ the induced complex of groups over $\cB(\sigma)$, that is, the pullback of $G(\cY)$ under the inclusion $\cB(\sigma) \hra \cY$.

We define a morphism $F_\sigma$ from $G\big(\cB(\sigma)\big)$ to $G_\sigma$ as follows. For every vertex $\tau$ of $\cB(\sigma)$ (that is, for every simplex $\tau$ of $Y$ containing $\sigma$), the map $\big(F_\sigma\big)_\tau$ is the map $\psi_{\sigma, \tau}$. Let $a$ be an edge of $\cB(\sigma)$. If $t(a)= \sigma$, we define $F_\sigma(a)$ as the identity element of $G_\sigma$. Otherwise, let $b$ be the edge from $t(a)$ to $\sigma$, and we set $F_\sigma(a) = g_{b, a}.$

This defines a morphism which is injective on the local groups, so that $G\big(\cB(\sigma)\big)$ is developable by Theorem \ref{developable}. We denote by $\widetilde{\cB(\sigma)}$ the development of $\cB(\sigma)$ associated to this morphism, and by $\widetilde{B(\sigma)}$ the simplicial realisation of $\widetilde{\cB(\sigma)}$. We dispose of the following description of $\widetilde{\cB(\sigma)}$ (see Theorem III.$\cC$.2.13 of \cite{BridsonHaefliger}):
$$V(\widetilde{\cB(\sigma)}) = \underset{\tau \in V(\cB(\sigma))}{\coprod} \bigg( \lquotient{\psi_{\sigma, \tau}(G_\tau)}{G_\sigma} \times \{\tau\} \bigg),$$
$$A(\widetilde{\cB(\sigma)}) = \underset{a \in A(\cB(\sigma))}{\coprod} \bigg( \lquotient{\psi_{\sigma, i(a)}(G_{i(a)})}{G_\sigma} \times \{a\} \bigg);$$
the initial and terminal vertices are defined as follows:
$$ i\big([g],a\big)= \big([g], i(a)\big),$$
$$ t\big([g],a\big)= \big([g]F_\sigma(a)^{-1}, t(a)\big);$$
the composition is
$$\big([g],b\big)\big([gF_\sigma(a)^{-1}], a\big) = \big([g], ba\big) $$
where $(b,a)$ is a pair of composable edges of $\cB(\sigma)$ and $g$ an element of $G_\sigma$. We also have, for an integer $k \geq 1$:
$$A^{(k)}(\widetilde{\cB(\sigma)}) = \underset{A \in A^{(k)}(\cB(\sigma))}{\coprod} \bigg( \lquotient{\psi_{\sigma, i(A)}(G_{i(A)})}{G_\sigma} \times \{A\} \bigg).$$
The simplicial realisation $\widetilde{B(\sigma)}$ has the structure of a simplicial cone, hence is contractible. If $G(\cY)$ is developable, such a cone is simplicially isomorphic to the block associated to any lift $\widetilde{\sigma}$ of $\sigma$.

\section{Complexes of spaces}
\label{Section3}
Complexes of spaces are a high-dimensional generalisation of graphs of spaces. They were considered, in relation with complexes of groups, by Corson \cite{CorsonComplexesofGroups} and Haefliger \cite{HaefligerExtension}. Here, we will only be dealing with Corson's definition as it is more flexible (see Remark \ref{HaefligervsCorson} for further details). 

\begin{definition}[Complexes of spaces in the sense of Corson \cite{CorsonComplexesofGroups}]
Let $Y$ be a connected simplicial complex. A complex of spaces over $Y$ is a connected simplicial complex $X$ together with a simplicial map $p: X \ra Y$ such that for each open simplex $\sigma$ of $X$, $p^{-1}(\sigma)$ is a connected subcomplex of $X$ of the form $X_\sigma \times \sigma$, where $X_\sigma$ is the pre-image of the centre of the simplex $\sigma$ (the \textit{fibre} of $\sigma$), and such that for every subface $\tau$ of $\sigma$, the induced map on fundamental groups $\pi_1(X_\sigma) \ra \pi_1(X_\tau)$, obtained by translating the base point along a path in $X_\sigma$, is injective. Furthermore, we require that the topology on $X$ be coherent with subsets of the form $p^{-1}(\overline{\sigma})$, where $\overline{\sigma}$ is a closed simplex of $Y$.
\end{definition}

\begin{rmk}
Haefliger's definition imposes in addition that the restriction of the projection $p:X \ra Y$ to the $1$-skeleton of $X$ admits a section $s: Y^{(1)} \ra X^{(1)}$. This however turns out to be incompatible with the equivariance wanted in our constructions.
\label{HaefligervsCorson}
\end{rmk}

Let $G(\cY)$ be a developable complex of groups over a simplicial complex $Y$ with a contractible universal cover. Haefliger \cite{HaefligerExtension} and independently Corson construct an Eilenberg--MacLane space for the fundamental group of $G(\cY)$ as a complex of spaces over $Y$, with fibres Eilenberg--MacLane spaces for the local groups. As we are interested in cocompact models of classifying spaces for proper action, we adopt a slightly different point of view. 

\begin{definition}[Complex of classifying spaces compatible with a complex of groups \cite{MartinBoundaries}]
Let $G(\cY) = (G_\sigma, \psi_a, g_{b,a})$ be a complex of groups over a simplicial complex $Y$. A \textit{complex of classifying spaces $\underbar{E}G(\cY)$ compatible with the complex of groups $G(\cY)$} consists of the following:
\begin{itemize}
 \item For every vertex $\sigma$ of $\cY$, a space $\underbar{E}G_\sigma$ (called a \textit{fibre}) that is a model of classifying space for proper actions of the local group $G_\sigma$,
 \item For every edge $a$ of $\cY$, a $\psi_a$-equivariant map $\phi_a: \underbar{E}G_{i(a)} \ra \underbar{E}G_{t(a)}$, that is, for every $g \in G_{i(a)}$ and every $x \in \underbar{E}G_{i(a)}$, we have 
$$ \phi_{a}(g.x) = \psi_{a}(g).\phi_{a}(x),$$
and such that for every pair $(b,a)$ of composable edges of $\cY$, we have:
$$g_{b,a} \cdot \phi_{ba} = \phi_b  \phi_a.$$
\end{itemize}
\label{EZcomplexofspaces}
\end{definition}

\begin{notation}
If $a$ is an edge of $\cY$ corresponding to an inclusion $\sigma\subset \sigma'$, we will sometimes write $\phi_{\sigma, \sigma'}$ in place of $\phi_a$. By convention, we also define $\phi_{\sigma, \sigma}$ as the identity map of $\underbar{E}G_\sigma$.
\end{notation}

We emphasise that a complex of classifying spaces compatible with the complex of groups $G(\cY)$ is \textit{not} a complex of spaces over $Y$ if the twist coefficients $g_{b,a}$ are not trivial. However, the following holds: 

\begin{thm}[M. \cite{MartinBoundaries}]
Let $G(\cY)$ be a developable complex of groups over a finite simplicial complex $Y$, such that for every finite subgroup of the fundamental group of $G(\cY)$, the associated fixed-point set in the universal cover of $G(\cY)$ is contractible. Assume that there is a complex of classifying spaces $\underbar{E}G(\cY)$ compatible with $G(\cY)$. Then there exists a model of classifying space for proper actions for the fundamental group of $G(\cY)$, obtained as a complex of spaces (in the sense of Corson) over the universal cover of $G(\cY)$, and with fibres classifying spaces for proper actions of the local groups of $G(\cY)$. Furthermore, if all the fibres of $\underbar{E}G(\cY)$ are cocompact models, then the resulting complex of spaces is a cocompact model of classifying spaces for proper actions. \qed
\end{thm}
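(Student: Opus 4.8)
The plan is to realise the desired classifying space as the total space $E$ of a complex of spaces $p \colon E \to X$ (in the sense of Corson) over the universal cover $X$ of $G(\cY)$, built from the data of the compatible complex of classifying spaces $\underbar{E}G(\cY)$, and then to verify directly the characterisation of a model for proper actions: namely that $E$ is a $G$-CW complex all of whose point stabilisers are finite and whose fixed-point set $E^H$ is contractible for every finite subgroup $H \leq G$ (the case of infinite $H$ being automatically handled by properness). Here $G := \pi_1(G(\cY))$.

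First I would construct $E$. Since $G(\cY)$ is developable, $X$ is a simplicial complex on which $G$ acts without inversion, with simplex stabilisers conjugate to the local groups $G_\sigma$. Working over the barycentric subdivision $X'$, I would place over the barycentre $\hat\sigma$ of each simplex $\sigma$ of $X$ the fibre $\underbar{E}G_{\bar\sigma}$, where $\bar\sigma$ is the image of $\sigma$ in $Y$, and glue along each inclusion $\tau \subset \sigma$ using the $\psi$-equivariant maps $\phi_{\bar\tau,\bar\sigma}$, assembling these into a complex of spaces. The equivariance of the $\phi_a$ together with the relations $g_{b,a}\cdot\phi_{ba} = \phi_b\phi_a$ are precisely what is needed to promote the $G$-action on $X$ to a simplicial $G$-action on $E$ commuting with $p$; checking that this action is well defined and coherent over chains of simplices is the most delicate piece of bookkeeping. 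Properness is then immediate: the stabiliser of a point of $E$ maps into the stabiliser $G_\sigma$ of the simplex below it, and inside the fibre $\underbar{E}G_{\bar\sigma}$ the $G_\sigma$-action is proper because that fibre is a model for proper actions, so every point stabiliser is finite.

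For the homotopy type I would exploit the observation that taking $H = \{1\}$ in the hypothesis forces $X = X^{\{1\}}$ to be contractible. Since every fibre $\underbar{E}G_{\bar\sigma}$ is contractible (being the $\{1\}$-fixed set of a model for proper actions), the projection $p \colon E \to X$ is a complex of spaces with contractible fibres; by a homotopy-colimit argument it is then a homotopy equivalence, so $E \simeq X \simeq *$. The same pattern governs fixed points. For finite $H \leq G$, the map $p$ restricts to $p \colon E^H \to X^H$. Because $G$ acts without inversion, the relevant vertices of $X^H$ are barycentres $\hat\sigma$ of $H$-invariant simplices $\sigma$, for which $H \leq G_\sigma$; over such a vertex the fibre of the restricted map is $(\underbar{E}G_{\bar\sigma})^H$, which is contractible since $\underbar{E}G_{\bar\sigma}$ is a model for proper actions of $G_\sigma$. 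Hence $E^H \to X^H$ is again a complex of spaces with contractible fibres, now over the base $X^H$, which is contractible by hypothesis, so $E^H \simeq X^H \simeq *$.

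Finally, cocompactness follows by a finiteness count: $E/G$ is assembled from one piece of the form $\underbar{E}G_{\bar\sigma}/G_{\bar\sigma} \times \bar\sigma$ for each of the finitely many simplices $\bar\sigma$ of the finite complex $Y$, and each such piece is compact exactly when the fibre $\underbar{E}G_{\bar\sigma}$ is a cocompact model. I expect the real obstacles to lie in two places. The first is showing that the fibrewise data genuinely glue into a well-defined $G$-space, i.e. that the coherence forced by the twisting elements $g_{b,a}$ and the cocycle condition makes the $G$-action consistent over all composable chains. The second is justifying equivariantly the step "a complex of spaces with contractible fibres over a contractible base is contractible": one must identify $E^H$ as such a complex of spaces over $X^H$ with the correct fibres and then invoke the injectivity-on-$\pi_1$ condition of Corson's definition together with a homotopy-colimit (or spectral-sequence) argument to conclude that the projection is a homotopy equivalence.
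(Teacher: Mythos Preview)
The theorem you are attempting to prove is not actually proved in this paper: it is cited from the author's earlier work \cite{MartinBoundaries} and stated here with a terminal \qed\ to indicate that no proof is supplied. There is therefore no proof in the paper to compare your proposal against.

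That said, your sketch is sound and follows the expected line. The paper itself uses precisely the same mechanism when proving the Proposition in Section~\ref{Section5} about the block spaces $E(\sigma)$: one observes that the projection to a contractible base is a complex of spaces (in Corson's sense) with contractible fibres, invokes Proposition~3.1 of \cite{CorsonComplexesofGroups} to conclude contractibility of the total space, and then repeats the argument fibrewise on fixed-point sets $E^H \to \widetilde{B(\sigma)}^H$ (respectively $E^H \to X^H$) to verify the classifying-space property for finite $H$. Your identification of the two genuine obstacles is accurate: the coherence of the $G$-action over composable chains is exactly what the cocycle condition $\psi_c(g_{b,a})g_{c,ba} = g_{c,b}g_{cb,a}$ buys, and the ``contractible fibres over contractible base'' step is handled by Corson's result rather than by an ad hoc spectral-sequence argument. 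One small refinement: to see that $E^H \to X^H$ is again a Corson complex of spaces you need the $\pi_1$-injectivity condition, which here is vacuous since all fibres $(\underbar{E}G_{\bar\sigma})^H$ are contractible, hence simply connected.
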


\section{The topological construction.}
\label{Section5}
From now on, we assume that we are given a complex of groups $G(\cY)$ over a finite simplicial complex $Y$, such that each local group admits a cocompact model of classifying space for proper actions. For each simplex $\sigma$ of $Y$, we choose a model of classifying space for proper actions to be a \textit{based} $G_\sigma$-space with a chosen basepoint, that is, with a chosen $G_\sigma$-orbit and a prefered point in that orbit.  For every edge $a \in A(\cY)$, we choose a based $\psi_a$-equivariant continuous map $\varphi_a$ from $\underbar{E}G_{i(a)}$ to $\underbar{E}G_{t(a)}$. Without loss of generality, we can assume that the maps $\psi_a$ preserve basepoints.

Let $k \geq 1$ be an integer, and denote by $I^k$ the $k$-dimensional cube $[0,1]^k$.

In this section, we construct by induction CW-complexes $E_0(\sigma), E_1(\sigma), \ldots$ such that for every integer $k \geq 0$, $E_k(\sigma)$ is a complex of spaces (in the sense of Corson) over the $k$-skeleton of the block $\widetilde{B(\sigma)}$. In order to construct these spaces, we need the following observation: 

\begin{lem}
Let $\sigma \subset \sigma'$ be simplices of $Y$, $k \geq 1$ an integer, and $f: \underbar{E}G_{\sigma'} \times \partial I^k \ra \underbar{E}G_\sigma$ a $\psi_{\sigma, \sigma'}$-equivariant map, where $G_{\sigma'}$ acts trivially on $I^k$. Then $f$ extends to a $\psi_{\sigma, \sigma'}$-equivariant map $F: \underbar{E}G_{\sigma'} \times I^k \ra \underbar{E}G_\sigma$. 
\label{lemmetechnique}
\end{lem}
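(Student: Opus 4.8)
The plan is to read this as an equivariant extension problem. Set $H := G_{\sigma'}$ and $\psi := \psi_{\sigma,\sigma'}$. Through the injective homomorphism $\psi$, the space $\underbar{E}G_\sigma$ becomes an $H$-space, and a map is $\psi$-equivariant precisely when it is $H$-equivariant for this action. Since $H$ acts trivially on $I^k$, the map $f$ is an $H$-equivariant map defined on the $H$-subcomplex $\underbar{E}G_{\sigma'}\times\partial I^k$, and we seek an $H$-equivariant extension to $\underbar{E}G_{\sigma'}\times I^k$. The key point is that, because $\psi$ is injective, $\underbar{E}G_\sigma$ is itself a model of classifying space for proper actions of $H$: for every finite subgroup $L\leq H$ the image $\psi(L)$ is a finite subgroup of $G_\sigma$, so that
$$(\underbar{E}G_\sigma)^{L}=\underbar{E}G_\sigma^{\psi(L)}$$
is contractible, and the $H$-action is proper since point stabilisers are $\psi$-preimages of the finite stabilisers in $G_\sigma$.

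First I would equip $\underbar{E}G_{\sigma'}$ with its $H$-CW structure and $I^k$ with its cube CW structure, for which $\partial I^k$ is a subcomplex. Then $(\underbar{E}G_{\sigma'}\times I^k,\ \underbar{E}G_{\sigma'}\times\partial I^k)$ is a relative $H$-CW pair, with $H$ acting on the first factor only, and its relative equivariant cells are the products of a cell $H/L\times D^n$ of $\underbar{E}G_{\sigma'}$ (with $L$ finite) with the unique top cell of $I^k$, giving orbits of the form $H/L\times D^{n+k}$. I would extend $f$ over these cells by induction on dimension; at each stage the boundary $H/L\times S^{n+k-1}$ of a new cell has already been mapped, partly by $f$ on the $\underbar{E}G_{\sigma'}\times\partial I^k$ face and partly by the previously constructed extension on the lower-skeletal face.

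Extending over one such orbit reduces, by equivariance, to extending the $L$-equivariant map given on the slice $\{eL\}\times S^{n+k-1}$. As $L$ acts trivially on this slice --- it fixes the coset $eL$ and acts trivially on the cell coordinates $D^n$ and on $I^k$ --- the given map is simply a map $S^{n+k-1}\to(\underbar{E}G_\sigma)^{L}$ into the fixed-point set, and we must extend it over the disk $D^{n+k}$. Since $(\underbar{E}G_\sigma)^L$ is contractible, this extension exists; transporting it around the orbit by the $H$-action yields the required $H$-equivariant extension over the orbit. Assembling these over all relative cells produces the $\psi_{\sigma,\sigma'}$-equivariant extension $F:\underbar{E}G_{\sigma'}\times I^k\to\underbar{E}G_\sigma$ restricting to $f$.

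The step demanding the most care is this cell-wise reduction to an ordinary extension problem into a fixed-point set: it relies on the isotropy group $L$ of each cell acting trivially on the cell coordinates --- exactly the feature of $H$-CW structures combined with the trivial action on $I^k$ --- and on the injectivity of $\psi_{\sigma,\sigma'}$, without which $\psi(L)$ might be infinite and the fixed-point set $\underbar{E}G_\sigma^{\psi(L)}$ empty rather than contractible. Conceptually, the whole argument is the relative form of the universal property of classifying spaces for proper actions: all equivariant obstructions to the extension vanish because the target has contractible fixed-point sets for every finite isotropy group occurring in the source.
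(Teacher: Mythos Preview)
Your argument is correct and is essentially the same approach as the paper's: both reduce the problem to equivariant obstruction theory, using that the isotropy groups in $\underbar{E}G_{\sigma'}\times I^k$ are finite and that the corresponding fixed-point sets in $\underbar{E}G_\sigma$ are contractible. The paper simply records these two observations and then declares the extension to be ``a standard consequence of equivariant obstruction theory'', whereas you unpack that phrase into an explicit cell-by-cell induction over the relative $H$-CW pair $(\underbar{E}G_{\sigma'}\times I^k,\ \underbar{E}G_{\sigma'}\times\partial I^k)$.

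One small point worth noting: the paper makes a case distinction between $k\geq 2$ and $k=1$. For $k\geq 2$ the $1$-skeleton of $\underbar{E}G_{\sigma'}\times I^k$ already lies in $\underbar{E}G_{\sigma'}\times\partial I^k$, so obstruction theory starts immediately; for $k=1$ the paper first extends over the $1$-skeleton by invoking the ambient basepoint convention (that the two maps $f_0,f_1$ agree on the chosen $G_{\sigma'}$-orbit). Your uniform treatment shows this extra hypothesis is not needed: for a $0$-cell $H/L$ of $\underbar{E}G_{\sigma'}$, the two image points $f_0,f_1$ lie in the contractible (hence path-connected) set $(\underbar{E}G_\sigma)^L$, so a connecting path exists without any basepoint condition. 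This is a mild improvement in presentation rather than a genuinely different method.
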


\begin{proof}
First notice that the space $\underbar{E}G_{\sigma'} \times \partial I^k$ is a $G_{\sigma'}$-space whose isotropy groups are finite by definition of $\underbar{E}G_{\sigma'}$. Furthermore, for each such finite subgroup $F$ of $G_{\sigma'}$, the fixed-point set $(\underbar{E}G_{\sigma})^{\psi_{\sigma, \sigma'}(F)}$  is contractible by definition of $\underbar{E}G_{\sigma}$. 

We start with the case $k \geq 2$. The function $f$ is defined on  $\underbar{E}G_{\sigma'} \times \partial I^k $, so in particular on the $1$-skeleton of $\underbar{E}G_{\sigma'} \times I^k$. Using the above remarks, it is then a standard consequence of equivariant obstruction theory that the map $f: \underbar{E}G_{\sigma'} \times \partial I^k \ra \underbar{E}G_{\sigma}$ equivariantly extends to $\underbar{E}G_{\sigma'} \times I^{k}$.

For $k=1$, we are given two equivariant maps $f_1, f_2: \underbar{E}G_{\sigma'} \ra \underbar{E}G_{\sigma}$, which coincide on the chosen $G_{\sigma'}$-orbit by assumption. Thus we can naturally extend the map $f$ to the $1$-skeleton of $\underbar{E}G_{\sigma'} \times I$, and one then concludes with the same reasoning as above.
\end{proof}

Let $A=(a_k, \ldots, a_1)$ be an element of $ A^{(k)}(\sigma)$. Following Haefliger \cite{HaefligerExtension}, we denote by $r_k: I^k \ra \Delta^k$ the simplicial map which sends the vertex $(0, \ldots, 0) \in I^k$ on the vertex $(0, \ldots, 0,1) \in \Delta^k$, and which sends the vertex $(t_1, \ldots, t_{i-1}, 1, 0, \ldots, 0)$ on the vertex $(0, \ldots, 0,1,0, \ldots, 0)$ (where the single $1$ is at position $i+1$ starting from the right). Note that $r_k$ realises a homeomorphism between the interior of $I^k$ and the interior of $\Delta^k$.\\

We define the space 
\begin{center}
$E_A :=$ \raise.5ex\hbox{$\bigg( G_\sigma \times \underbar{E}G_{i(A)} \bigg)$}\big/\lower.5ex\hbox{$\sim$} $\times \{A\} \times I^k,$
\end{center}
where $ (g,g'x) \sim (g \psi_a(g'), x)$ for every $g \in G_\sigma$, $x \in \underbar{E}G_{i(A)}$, $g' \in G_{i(A)}$, and where $a$ denotes the concatenation $a_k \ldots a_1$. 

\begin{rmk}
We can define an equivalence relation $\sim'$ on $ G_\sigma \times \underbar{E}G_{i(A)}\times \{A\}$ (resp. $\sim''$ on $ G_\sigma \times \underbar{E}G_{i(A)}\times \{A\} \times I^k$), yielding a space $E_A'$ (resp. $E_A''$), and such that the identity of  $ G_\sigma \times \underbar{E}G_{i(A)}\times \{A\} \times I^k$ yields a homeomorphism $E_A \ra E_A'$ (resp. $E_A \ra E_A''$). Therefore, we will sometimes write $\big([g,x,A],(t_i)_i\big)$  or $[g,x,A,(t_i)_i]$ when speaking of an element of $E_A$.
\end{rmk}

Note that there is a $G_\sigma$-equivariant map $p_A$ from $E_A$ to the $G_\sigma$-orbit of the simplex $|A|$ in $\widetilde{B(\sigma)}$, defined by 
$$p_A\big( [g,x,A,(t_i)_i] \big) = [g,A,r_k\big((t_i)_i\big)] .$$
Moreover, the preimage $p_A^{-1}(g \overset{\circ}{|A|})$ of a translate of the interior of $|A|$ is homeomorphic to the product $\underbar{E}G_{i(A)} \times \mathring{I}^k$. \\

\textbf{Step 0:} We set 
$$E_0(\sigma) = \underset{v \in V(\cB(\sigma))}{\coprod} E_v,$$
 which comes with the obvious projection to 
$$\widetilde{B(\sigma)}^{(0)}= \underset{v \in V(\cB(\sigma))}{\coprod} \{v\}.$$

\textbf{Step 1:} For an element $A\in A^{(1)}(\sigma)$ (that is, an edge $a $ of  $\cB(\sigma)$), let $\Phi_A: \partial E_A \ra E_0(\sigma)$ be the map that sends the element $[g,x,a,0]$ to $[g,x,i(a)]$ and $[g,x,a,1]$ to $[gF_\sigma(a)^{-1},\varphi_a(x),t(a)]$.
We can thus define the space $E_1(\sigma)$ as the quotient space 
\begin{center}
$E_1(\sigma) :=$ \raise.5ex\hbox{$\bigg( E_0(\sigma) \sqcup \underset{A \in A^{(1)}(\sigma)}{\coprod} E_A \bigg)$}\big/\lower.5ex\hbox{$(\Phi_A)_{A \in A^{(1)}(\sigma)}$}.
\end{center}
We check that the various maps $p_A$ can be assembled into a map $p_1: E_1(\sigma) \ra \widetilde{B(\sigma)}^{(1)}$ that makes $E_1(\sigma)$ a complex of spaces (in the sense of Corson) over $\widetilde{B(\sigma)}^{(1)}$.\\

\textbf{Step 2:} We now turn to the construction of $E_2(\sigma)$.  Using the same idea, we first want to define a map $\Phi_A: \partial E_A \ra E_1(\sigma)$ for every $A \in A^{(2)}(\sigma)$. Let $A=(a_2,a_1)$ be such a pair of composable edges. Here, there are a priori two different ways to map $\underbar{E}G_{i(a_1)}$ to $\underbar{E}G_{t(a_2)}$ in a $\psi_{a_2a_1}$-equivariant way, namely  $\varphi_{a_2a_1}$ and $ g_{a_2,a_1}^{-1} \varphi_{a_2} \varphi_{a_1}$. In view of Lemma \ref{lemmetechnique}, these maps are equivariantly homotopic.

\begin{definition}
We denote by $H_{a_2, a_1}$ an equivariant map $\underbar{E}G_{i(a_1)} \times [0,1] \ra \underbar{E}G_{t(a_2)}$ such that $H_{a_2, a_1}(\bullet,0) = \varphi_{a_2a_1}$ and $H_{a_2,a_1}(\bullet,1) = g_{a_2,a_1}^{-1} \varphi_{a_2}  \varphi_{a_1}$. 
\end{definition}

For an element $A=(a_2, a_1) \in A^{(2)}(\sigma)$, we define a map $\Phi_A: \partial E_A \ra E_1(\sigma)$ as follows: 
\begin{align*}
\Phi_A\big( [g,x,A,(t_1,t_2)] \big) &= [g,x,a_1,t_1] \mbox{~if~}  t_2=0, 
\\ \Phi_A\big( [g,x,A,(t_1,t_2)] \big) &= [g,x,a_2a_1,t_2] \mbox{~if~}  t_1=0,
\\ \Phi_A\big( [g,x,A,(t_1,t_2)] \big) &= [gF_\sigma(a_1)^{-1},\varphi_{a_1}(x),a_2,t_2] \mbox{~if~}  t_1=1,
\\ \Phi_A\big( [g,x,A,(t_1,t_2)] \big) &= [gF_\sigma(a_2a_1)^{-1},H_{a_2,a_1}(x,t_1),t(a_2)] \mbox{~if~}  t_2=1.
\end{align*}
We need to check that these definitions are compatible. The only non-trivial case to consider is when $t_1=t_2=1$, for which we get
$$[gF_\sigma(a_1)^{-1},\varphi_{a_1}(x),a_2,1]= [gF_\sigma(a_1)^{-1}F_\sigma(a_2)^{-1},\varphi_{a_2}\varphi_{a_1}(x),t(a_2)] $$
and
\begin{align*}
[gF_\sigma(a_2a_1)^{-1},H_{a_2,a_1}(x,1),t(a_2)] &= [gF_\sigma(a_2a_1)^{-1},g_{a_2,a_1}^{-1} \varphi_{a_2}  \varphi_{a_1}(x),t(a_2)] \\ &=[gF_\sigma(a_2a_1)^{-1}\psi_{a_3a_2a_1}(g_{a_2,a_1})^{-1}, \varphi_{a_2} \varphi_{a_1}(x),t(a_2)],
\end{align*}
 where $a_3$ stands for the edge (possibly empty) corresponding to the inclusion $\sigma \subset t(a_2)$. But the cocycle condition yields 
$$F_\sigma(a_2)F_\sigma(a_1) = \psi_{a_3a_2a_1}(g_{a_2,a_1})F_\sigma(a_2a_1),$$
 thus 
$$F_\sigma(a_2a_1)^{-1} \psi_{a_3a_2a_1}(g_{a_2,a_1})^{-1}= F_\sigma(a_1)^{-1}F_\sigma(a_2)^{-1}, $$
hence the equality.

Note, as it will be important for the following steps, that the following holds: 
$$\mbox{~for every~} A=(a_2, a_1) \in A^{(2)}(\sigma), \mbox{~we have~} \mbox{Im} (\Phi_A)_{|_{t_2=1}} \subset E_{t(A)}. $$

We now define $E_2(\sigma)$ as the quotient space 

\begin{center}
$E_2(\sigma) :=$ \raise.5ex\hbox{$\bigg( E_1(\sigma) \sqcup \underset{A \in A^{(2)}(\sigma)}{\coprod} E_A \bigg)$}\big/\lower.5ex\hbox{$(\Phi_A)_{A \in A^{(2)}(\sigma)}$}.
\end{center}
Here again, it is straightforward to check that the various maps $p_A$ can be assembled into a map $p_2: E_2(\sigma) \ra \widetilde{B(\sigma)}^{(2)}$ that makes $E_2(\sigma)$ a complex of spaces (in the sense of Corson) over $\widetilde{B(\sigma)}^{(2)}$.\\

\textbf{Step $k \geq 3$:} Suppose by induction that we have defined the spaces $E_0(\sigma), \ldots, E_{k-1}(\sigma)$ and the maps $\Phi_A$ for every sequence of composable edges $(a_i, \ldots, a_1), 1 \leq i<k$, satisfying the following additional condition: 
\begin{equation}
\mbox{~for every~} A \in A^{(i)}(\sigma), 1\leq i<k, \mbox{~we have~} \mbox{Im} (\Phi_A)_{|_{t_i=1}} \subset E_{t(A)} \tag{$\dagger$}
\end{equation}
Let $A=(a_k,\ldots,a_1)$ be a sequence of composable edges of $\cB(\sigma)$. Let $\partial ' I^k$ be the closure of the boundary $\partial I^k$ with the face $\{t_k=1\}$ removed, and 
\begin{center}
$\partial ' E_A :=$ \raise.5ex\hbox{$\bigg( G_\sigma \times \underbar{E}G_{i(A)} \bigg)$}\big/\lower.5ex\hbox{$\sim$} $\times \{A\} \times \partial ' I^k$ 
\end{center}
the associated subset of $\partial  E_A$. We first define a map $\Phi_A': \partial ' E_A \ra E_{k-1}(\sigma)$ as follows. The element $\Phi_A\big( [g,x,A,(t_1, \ldots, t_k)]\big)$ is defined as:
\begin{align*}
&\Phi_{a_k, \ldots, a_{i+1}a_i, \ldots, a_1} \big(  [g,x, (a_k, \ldots, a_{i+1}a_i, \ldots, a_1), (t_1,\ldots, \hat{t_i}, \ldots, t_k)]\big) &&\mbox{~for~} t_i=0 \mbox{~and~} 1 \leq i < k,
\\ &\Phi_{a_{k-1}, \ldots, a_1} \big( [g,x, (a_{k-1}, \ldots, a_1), (t_1, \ldots, t_{k-1})] \big ) && \mbox{~for~} t_k=0,
\end{align*}
and
\begin{align*}
\Phi_{a_k, \ldots, a_{i+1}} \bigg(  \Phi_{a_k, \ldots, a_{i+1}}\big( [g,x,(a_k, \ldots, a_{i+1}), (t_1, \ldots, t_{k-1},1)]\big) , (t_{i+1}, \ldots, t_k)     \bigg) 
\end{align*}
$ \mbox{for~} t_i=1 \mbox{~and~} 1 \leq i < k.$
We easily check that these definitions are compatible. Because of the condition $(\dagger)$, the restriction of $\Phi_A'$ to the set of elements of $\partial ' E_A$ with $t_k=1$  defines a $\psi_{a_k \ldots a_1}$-equivariant map 
\begin{center}
$ $ \raise.5ex\hbox{$\bigg( G_\sigma \times \underbar{E}G_{i(A)} \bigg)$}\big/\lower.5ex\hbox{$\sim$} $\times \{A\} \times \partial \{t_k=1\} \ra E_{t(A)}$ 
\end{center}
that can be extended to a $\psi_{a_k \ldots a_1}$-equivariant map 
\begin{center}
$ $ \raise.5ex\hbox{$\bigg( G_\sigma \times \underbar{E}G_{i(A)} \bigg)$}\big/\lower.5ex\hbox{$\sim$} $\times \{A\} \times  \{t_k=1\} \ra E_{t(A)} $ 
\end{center}
in view of Lemma \ref{lemmetechnique}.

We thus obtain a map $\Phi_A: \partial E_A \ra E_{k-1}(\sigma)$,  such that   $\mbox{Im} \Phi_{|_{t_k=1}} \subset E_{t(A)}$. As usual, we use these maps to define the space $E_k(\sigma)$ as the quotient space 

\begin{center}
$E_k(\sigma) :=$ \raise.5ex\hbox{$\bigg( E_{k-1}(\sigma) \sqcup \underset{A \in A^{(k)}(\sigma)}{\coprod} E_A \bigg)$}\big/\lower.5ex\hbox{$(\Phi_A)_{A \in A^{(k)}(\sigma)}$}.
\end{center}

Here again, we check that the projections $p_A$ can be combined into a map $p_k: E_k(\sigma) \ra \widetilde{B(\sigma)}^{(k)}$ that turns $E_k(\sigma)$ into a complex of spaces (in the sense of Corson) over $\widetilde{B(\sigma)}^{(k)}$, which concludes the induction.\\

Since the complex $\widetilde{B(\sigma)}$ is of finite dimension, this procedure eventually stops, and we denote by $E(\sigma)$ the final space obtained. This space is a complex of spaces (in the sense of Corson) over the block $\widetilde{B(\sigma)}$. 

\begin{prop}
For each simplex $\sigma$ of $Y$, the space $E(\sigma)$ is a model of classifying space for proper actions of $G_\sigma$. If every fibre is a cocompact model of classifying space for proper actions, then $E(\sigma)$ is a cocompact model.
\end{prop}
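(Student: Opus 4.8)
The plan is to verify directly that $E(\sigma)$, equipped with the $G_\sigma$-action induced by left multiplication on the first factor of each $E_A$, satisfies the standard characterisation of a classifying space for proper actions: it is a $G_\sigma$-CW-complex all of whose isotropy groups are finite, and for every finite subgroup $F \leq G_\sigma$ the fixed-point set $E(\sigma)^F$ is contractible. The guiding idea throughout is to exploit the $G_\sigma$-equivariant projection $p : E(\sigma) \ra \widetilde{B(\sigma)}$ onto the block, which is a simplicial cone with apex the vertex corresponding to $\sigma$. This parallels the cited theorem of \cite{MartinBoundaries}, but I would argue directly, since the construction only provides the equivariance of the gluing maps up to the homotopies $H_{a_2,a_1}$ rather than a strict compatible complex of classifying spaces.

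First I would determine the isotropy groups. A point of $E(\sigma)$ lying over the interior of a simplex $g|A|$ of $\widetilde{B(\sigma)}$ has stabiliser contained in the stabiliser of $g|A|$, which by the description of the development $\widetilde{\cB(\sigma)}$ is the conjugate $g\,\psi_{\sigma, i(A)}(G_{i(A)})\,g^{-1}$; since the fibre $p^{-1}(g\,\overset{\circ}{|A|})$ is $\underbar{E}G_{i(A)} \times \mathring{I}^k$, on which this group acts through the $G_{i(A)}$-action on $\underbar{E}G_{i(A)}$, the stabiliser of the point is a conjugate of a finite subgroup of $G_{i(A)}$. Hence all isotropy groups are finite, and in particular $E(\sigma)^H = \varnothing$ for every infinite subgroup $H$.

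The heart of the argument is the contractibility of $E(\sigma)^F$ for $F \leq G_\sigma$ finite. Since $F$ acts simplicially on $\widetilde{B(\sigma)}$ and fixes the apex, the set $\widetilde{B(\sigma)}^F$ is again a cone, hence contractible and nonempty. As $F$ is finite and acts cellularly, taking $F$-fixed points commutes with the gluings defining $E(\sigma)$, so $E(\sigma)^F$ inherits the structure of a complex of spaces over $\widetilde{B(\sigma)}^F$; its fibre over an $F$-fixed simplex is $(\underbar{E}G_\tau)^{F'} \times \mathring{I}^k$ for the corresponding finite subgroup $F' \leq G_\tau$, which is contractible because $\underbar{E}G_\tau$ is a classifying space for proper actions of $G_\tau$. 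I would then show, by induction on the skeleta $E_k(\sigma)^F$ and using the gluing lemma for homotopy equivalences (each $E_A^F$ being a product with the contractible $(\underbar{E}G_{i(A)})^{F'}$, attached along a cofibration), that the projection $E(\sigma)^F \ra \widetilde{B(\sigma)}^F$ is a homotopy equivalence; as the base is contractible, so is $E(\sigma)^F$. The case $F=\{1\}$ gives contractibility of $E(\sigma)$ itself. This inductive identification of the fixed-point set as a fibred object with contractible fibres, together with the verification that the attachments $\Phi_A$ behave well under passage to $F$-fixed points, is the step I expect to require the most care, since one must simultaneously track the twisting elements $g_{b,a}$, the homotopies $H_{a_2,a_1}$, and the coset bookkeeping of the development.

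Finally, for the cocompactness statement, I would observe that since $Y$ is finite the sub-scwol $\cB(\sigma)$ has finitely many simplices, so $\widetilde{B(\sigma)}$ has finitely many $G_\sigma$-orbits of simplices; as each building block satisfies $E_A/G_\sigma \cong \big(\underbar{E}G_{i(A)}/G_{i(A)}\big) \times I^k$, the quotient $E(\sigma)/G_\sigma$ is covered by finitely many compact pieces whenever each fibre $\underbar{E}G_{i(A)}$ is a cocompact model. Hence $E(\sigma)$ is then a cocompact model of classifying space for proper actions of $G_\sigma$.
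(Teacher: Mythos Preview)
Your proposal is correct and follows essentially the same approach as the paper: both arguments exploit the equivariant projection $p:E(\sigma)\ra\widetilde{B(\sigma)}$, observe that fixed-point sets $\widetilde{B(\sigma)}^F$ are cones (hence contractible), identify the fibres of $E(\sigma)^F$ as the contractible fixed-point sets $(\underbar{E}G_\tau)^{F'}$, and conclude that $E(\sigma)^F$ is contractible for every finite $F$. The only difference is packaging: where you propose an explicit induction on skeleta using the gluing lemma, the paper simply invokes Corson's Proposition~3.1 (a complex of spaces with contractible base and contractible fibres is contractible), which amounts to the same thing; your worry about the twisting elements and the homotopies $H_{a_2,a_1}$ is unnecessary, since these are absorbed into the statement that $E(\sigma)^F$ is again a complex of spaces in Corson's sense over $\widetilde{B(\sigma)}^F$.
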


\begin{proof}
The space $E(\sigma)$ is a complex of spaces (in the sense of Corson) over the contractible block $\widetilde{B(\sigma)}$. Moreover, each fibre is contractible, being a classifying space for proper actions of a local group of $G\big(\cB(\sigma)\big)$. It thus follows from Proposition 3.1 of \cite{CorsonComplexesofGroups} that $E(\sigma)$ is contractible. 

For every simplex $\tau$ of the block $B(\sigma)$, the action of $G_\tau$ on $\underbar{E}G_\tau$ is properly discontinuous, so it is straightforward to check that the same holds for the action of $G_\sigma$ on $E(\sigma)$. If each of these actions is also cocompact, it follows immediately that the action is cocompact.

Finally, let $H$ be a subgroup of $G_\sigma$. Since the projection $p:E(\sigma) \ra \widetilde{B(\sigma)}$ is equivariant, the fixed-point set $E(\sigma)^H$ is a complex of spaces (in the sense of Corson) over the fixed-point set $\widetilde{B(\sigma)}^H$. The latter subcomplex is contractible since it is simplicially a cone. Moreover, the fibre over a simplex $\tau$ is equivariantly homeomorphic to the fixed-point set $\underbar{E}G_\tau^H$. It thus follows that $E(\sigma)^H$ is non-empty if and only if $H$ is finite, in which case it is contractible, which concludes the proof.
\end{proof}

We now dispose of a cocompact model of classifying space $E(\sigma)$ for every simplex $\sigma$ of $Y$. By construction of these spaces, the embeddings of blocks $B(\sigma') \hra B(\sigma)$ (for simplices $\sigma \subset \sigma'$ of $Y$) are covered by equivariant embeddings $E(\sigma')\hra E(\sigma)$, where an element of the form $[g,x, A, (t_i)_i]$ of $E(\sigma')$ is sent to the element $[\psi_{\sigma, \sigma'}(g),x, A, (t_i)_i]$ of $E(\sigma)$. We now twist these maps in order to get a complex of classifying spaces compatible with $G(\cY)$. 

\begin{thm}
There exists a complex of classifying spaces compatible with $G(\cY)$, with embeddings as local maps. Furthermore, if each local group admits a cocompact model of classifying space for proper actions, then each fibre of the associated compatible complex of classifying spaces can be chosen to be a cocompact model.
\end{thm}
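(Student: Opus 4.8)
The plan is to take the fibres to be the spaces $E(\sigma)$ just constructed and to obtain the local maps by twisting the natural block-embeddings $E(\sigma')\hookrightarrow E(\sigma)$ by a suitable right-translation in the ambient local group. For every vertex $\sigma$ of $\cY$ I set $\underbar{E}G_\sigma := E(\sigma)$. By the previous Proposition each $E(\sigma)$ is a model of classifying space for proper actions of $G_\sigma$, and it is cocompact as soon as the chosen fibres are cocompact; since every local group is assumed to admit a cocompact model, this settles both the fibre requirement and the ``furthermore'' clause, and it only remains to produce the local maps $\phi_a$.

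For an edge $a=(\sigma,\sigma')$ (so $i(a)=\sigma'$, $t(a)=\sigma$) the discussion preceding the theorem gives the equivariant embedding $[g,x,A,(t_i)_i]\mapsto[\psi_{\sigma,\sigma'}(g),x,A,(t_i)_i]$. I would correct this by inserting an $A$-dependent right-translation, setting
$$\phi_a\big([g,x,A,(t_i)_i]\big)=\big[\psi_{\sigma,\sigma'}(g)\,g_{a,d_A},\,x,\,A,\,(t_i)_i\big],$$
where $d_A$ denotes the edge of $\cB(\sigma')$ from $i(A)$ to $\sigma'$ (the twist being trivial when $i(A)=\sigma'$). The virtue of this choice is twofold. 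First, the relation $\mbox{Ad}(g_{a,d_A})\psi_{\sigma,i(A)}=\psi_{\sigma,\sigma'}\psi_{\sigma',i(A)}$, which is an instance of the defining condition $\mbox{Ad}(g_{b,a})\psi_{ba}=\psi_b\psi_a$, is exactly what is needed for $\phi_a$ to respect the identification $\sim$ defining $E_A$, so the formula descends to each piece. Second, a right-translation commutes with the left $G_\sigma$-action, so $\phi_a$ is automatically $\psi_{\sigma,\sigma'}$-equivariant; and since $\psi_{\sigma,\sigma'}$ is injective and right-translation is a homeomorphism, $\phi_a$ is an embedding.

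I would then check that these piecewise definitions are compatible with the gluing maps $\Phi_A$, that is, that the twists $g_{a,d_A}$ agree over the faces $\partial_i A$ (on which $i(\partial_iA)$ may differ from $i(A)$). This is the routine but bookkeeping-heavy step; it is carried out exactly as in the construction of the spaces $E(\sigma)$ themselves, by invoking the cocycle identities for the triples of edges that arise from the face maps.

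The heart of the argument is the compatibility condition $g_{b,a}\cdot\phi_{ba}=\phi_b\phi_a$ for a composable pair, say $b=(\sigma,\sigma')$ and $a=(\sigma',\sigma'')$. Evaluating both sides on a class $[g,x,A,(t_i)_i]$ and using $\psi_{\sigma,\sigma'}\psi_{\sigma',\sigma''}=\mbox{Ad}(g_{b,a})\psi_{\sigma,\sigma''}$, the $g$-dependent factors cancel and the required equality reduces to the purely group-theoretic identity
$$g_{ba,\,e_A}=g_{b,a}^{-1}\,\psi_b\big(g_{a,\,e_A}\big)\,g_{b,\,a e_A},$$
where $e_A$ is the edge from $i(A)$ to $\sigma''$; this is precisely the cocycle condition $\psi_c(g_{b,a})g_{c,ba}=g_{c,b}g_{cb,a}$ applied to the triple $(b,a,e_A)$. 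I expect the main obstacle to be conceptual rather than computational: one must recognise that a \emph{right}-translation twist is the correct device, since a left-translation or a conjugation would destroy the $\psi_a$-equivariance, whereas right-translation is precisely flexible enough to absorb the conjugation coming from $\mbox{Ad}(g_{b,a})$ into the left-multiplication twist demanded by the definition, with the cocycle condition ensuring global consistency. Once this is established, $(\underbar{E}G_\sigma,\phi_a)$ is by construction a complex of classifying spaces compatible with $G(\cY)$ whose local maps are embeddings, with cocompact fibres whenever the local groups admit cocompact models, which is the claim.
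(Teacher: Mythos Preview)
Your proposal is correct and follows essentially the same route as the paper: take the fibres to be the spaces $E(\sigma)$, define the local maps by the right-translation twist $[g,x,A,(t_i)_i]\mapsto[\psi_a(g)\,g_{a,d_A},x,A,(t_i)_i]$, and reduce the compatibility $g_{b,a}\phi_{ba}=\phi_b\phi_a$ to the cocycle identity applied to the composable triple $(b,a,e_A)$. Your write-up is in fact slightly more careful than the paper's in two places: you explicitly verify that the twist descends through the relation $\sim$ on $E_A$ (using $\mbox{Ad}(g_{a,d_A})\psi_{\sigma,i(A)}=\psi_{\sigma,\sigma'}\psi_{\sigma',i(A)}$), and you name the relevant edge $d_A$ from $i(A)$ to $\sigma'$ rather than the concatenation of $A$, which makes the composability of the pair entering the twist transparent.
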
 

\begin{proof}
For every simplex $\sigma$ of $Y$, we define the fibre of $\sigma$ to be the space $E(\sigma)$. For every inclusion $\sigma \subset \sigma'$ of simplices, we define a $\psi_{b}$-equivariant embedding $\phi_{b}: E(\sigma') \hra E(\sigma)$ as follows, where $b$ stands for the edge of $\cB(\sigma')$ corresponding to the inclusion  $\sigma \subset \sigma'$. Let $A=(a_k, \ldots, a_1)$ be an element of $A^{(k)}(\cB(\sigma'))$ and define the edge $a$ as the concatenation $a_k\ldots a_1$. Notice that $(b,a)$ defines a pair of composable edges. We then define the restriction of $\phi_{b}$ to the subset $E_A \subset E(\sigma')$ by setting 
$$\phi_{b}\big( [g,x,A,(t_i)_i] \big) = [\psi_{b}(g)g_{b,a},x,A,(t_i)_i]. $$
One checks that these maps are compatible. 

Now let $\sigma \subset \sigma' \subset \sigma''$ be an inclusion of simplices of $Y$. Let $A=(a_k, \ldots, a_1)$ be an element of $A^{(k)}(\cB(\sigma''))$ and define the edge $a$ as the concatenation $a_k \ldots a_1$. Let $b$ be the edge of $\cB(\sigma'')$ corresponding to the inclusion $\sigma' \subset \sigma''$ and $c$ the edge of $\cB(\sigma')$ corresponding to $\sigma \subset \sigma'$. The map $\phi_{cb}$ sends an element $[g,x,A,(t_i)_i]$ to $[\psi_{cb}(g)g_{cb,a},x,A,(t_i)_i]$, while the map $\phi_{c}\phi_{b}$ sends $[g,x,A,(t_i)_i]$ to 
$$[\psi_{c}\psi_{b}(g)\psi_{c}(g_{b,a})g_{c,ba},x,A,(t_i)_i]= [g_{c,b}\psi_{cb}(g)g_{c,b}^{-1}\psi_{c}(g_{b,a})g_{c,ba},x,A,(t_i)_i].$$
Now the cocycle condition 
$$\psi_c(g_{b,a})g_{c,ba}= g_{c,b}g_{cb,a}$$
implies that 
$$\phi_{c}\phi_{b}= g_{c,b}\phi_{cb},$$
therefore turning $\big(E(\sigma), \phi_a\big)$ into a complex of classifying spaces compatible with  $G(\cY)$.
\end{proof}

\bibliographystyle{plain}
\bibliography{ComplexesOfSpaces}

\end{document}